\theoremstyle{definition}
\newtheorem{definition}{Definition}[section]
\newtheorem{remark}{Remark}
\newtheorem{lemma}{Lemma}
\newtheorem{theorem}{Theorem}
\newtheorem{proposition}{Proposition}
\title{Random Processes with Stationary Increments and Intrinsic Random Functions on the Real Line
}
\author{
  Jongwook Kim \\
  Ball State University \\
  \texttt{jongwook.kim@bsu.edu} \\
}
\begin{document}
\maketitle

\begin{abstract}
Random processes with stationary increments and intrinsic random processes are two concepts commonly used to deal with non-stationary random processes. They are broader classes than stationary random processes and conceptually closely related to each other. This paper illustrates the relationship between these two concepts of stochastic processes and shows that, under certain conditions, they are equivalent on the real line.
\end{abstract}

\keywords{intrinsic random function \and intrinsic covariance function \and random process with stationary increment \and integrated process \and stochastic process \and time series \and spatial statistics}

\section{Introduction}\label{sec:intro}

Although stationary assumption is widely used to analyze random processes, it is often too strict in real world. To relax the assumption of stationarity, Yaglom (1958) \cite{Yaglom1958} introduced a random process with stationary increments in which distribution of increments of random variables at different temporal or spatial points only depend on time lags or distances. He also developed the spectral representations of the processes with stationary n-th increments and their structure functions, which are essential for understanding the covariance structures of the processes. When the process is intrinsic stationary, the structure function can be seen as variogram that can replace the covariance function to analyze the spatial or temporal dependencies of the stochastic processes (Cressie, 1993) \cite{cressie1993}. Moreover, notion of structure function and its application is essential in the prediction model such as universal kriging (Zhang \& Huang, 2014) \cite{ZHANG2014153}. Given their practicality and usefulness, they are broadly used across various fields, especially time series analysis and econometrics. 

To address non-stationarity for similar purposes, the concept of an intrinsic random function was suggested by Matheron (1973) \cite{Materon1973}. Such random processes are characterized by their intrinsic properties under more abstract and profound mathematical settings. Therefore, compared to a random process with stationary increments, an intrinsic random function is more flexible and applicable not only to the real line but also to various other mathematical spaces. For example, Matheron (1973) \cite{Materon1973} illustrated how intrinsic random functions (IRFs) can be defined in higher-dimensional Euclidean spaces. He demonstrated that the mean of the IRF consists of lower monomials and that the transformed process becomes stationary. Similarly, Huang et al. (2016; 2019) \cite{HUANG201633} \cite{HUANG20197} extended the concept of intrinsic random functions to the circle and sphere, highlighting the critical role of low-frequency truncation to achieve stationarity. This flexibility of the IRF(d) framework made it most prevalent in geostatistics and spatial statistics.

Despite their frequent co-application to non-stationary stochastic problems, there is insufficient literature clearly explaining the theoretical link between the Intrinsic Random Function (IRF(d)), rooted in geostatistics, and the Random Process with Stationary Increments (I(d)), common in time series, on the real line. The central finding of this paper is the demonstration that these two process classes are, in fact, equivalent under specified conditions. This equivalence provides a clear, integrated mathematical foundation, resolving the need to characterize a non-stationary process as one or the other. Consequently, it strengthens the definition of these processes on $\mathbb{R}$ and facilitates the powerful cross-pollination of methods, such as applying I(d)'s spectral theory to IRF(d) problems, and vice-versa.

\bigskip

\section{Background and Definitions}

To formally establish this relationship, we first begin by reviewing the fundamental mathematical definitions and properties of the two concepts, starting with the random process with stationary increments.

\bigskip

\subsection{Random Processes with Stationary Increments} \label{subsec:rpsi}

\begin{definition} \label{def:random_process_stationary_increment}
(Yaglom 1958; Wei Chen et al. 2024) \cite{Yaglom1958} \cite{Chen2024} \\ 
$\{X(t), t \in \mathbb{R}\}$ is called a random process with stationary increments of order $d \in \mathbb{Z}$ $(d \ge 0)$, denoted I(d), if $\Delta_{\iota}^{d}X(t)$ is stationary where \\
$$\Delta_{\iota}^{d}X(t) = \sum_{k=0}^{d} (-1)^k \binom{d}{k} X(t - k\iota), \quad t,\iota \in \mathbb{R} \quad d \in \mathbb{Z}_{\ge 0}$$
Additionally, $D^d(h; \iota_1, \iota_2)$ is its structure function where\\ 
$$D^d(h;\iota_1,\iota_2) = E\bigl([\Delta_{\iota_1}^{d}X(t+h)] [\Delta_{\iota_2}^{d}X(t)]\bigl), \quad h,\iota_1,\iota_2 \in \mathbb{R}$$
This is positive semi-definite. 
\end{definition}

Note that when $\iota=\iota_1=\iota_2$ and $h=0$, $D(\iota) = E\bigl( [X(t) - X(t-\iota)]^2 \bigl)$ is a variogram. If $d=1$, then $\Delta_{\iota}X(t) = X(t) - X(t-\iota)$; thus, $D(\iota_1,\iota_2) = E\bigl( [X(t)-X(t - \iota_1)] \cdot [X(t)-X(t - \iota_2)] \bigl)$. Likewise, if $d=2$, $\Delta_{\iota}^2 X(t) = X(t) - 2 X(t-\iota) + X(t-2\iota) = \{X(t) - X(t-\iota)\} - \{ X(t-\iota) - X(t-2\iota) \}$. A stationary process corresponds to the case when $d=0$. When $d=1$, the random process is intrinsically stationary in that $X(t)-X(t-1)$ is stationary. For a discrete time series, that is, $t \in \mathbb{Z}$, the random process with stationary increments of order $d$ is also called an integrated process. According to Brockwell and Davis (1991) \cite{Brockwell1991}, the order $d$ of stationary increments usually stops at 1 or 2 in practice.

\bigskip

\begin{remark}\label{rmk:spectral_decomp_conti} (Yaglom 1987) \cite{yaglom1987correlation},
Let $X(t)$ be a random process with stationary increments with order $d$, and $Y(t)$ be a stationary random process such that $Y=X^{(d)}$, which is a d-th derivative (or generalized derivative) of $X$, where $t,h \in \mathbb{R}$. Since $Y(t)$ is stationary,

\begin{align}
&Y(t) =  \int_{-\infty}^{\infty}e^{i \omega t}dZ_y(\omega), \quad E \bigl(Y(t+h) \cdot Y(t) \bigl) = C_y(h) = \int_{-\infty}^{\infty}e^{i \omega h}dF_y(\omega), \label{eq:corr_y}
\end{align}

where $F_y(\omega)$ is a bounded non-decreasing function and $E(|dZ_y(\omega)|^2) = dF_y(\omega)$ for $Z_y(\omega)$, a random function with uncorrelated increments. Then, we can achieve the following spectral representations:
\begin{align}
X(t) &=  \int_{-\infty}^{\infty} \bigl( e^{it\omega} -1 - it\omega - \cdots - \frac{(i t \omega)^{d-1}}{(d-1)!} \bigl) dZ(\omega) + X_0 + X_1 t + \cdots + X_d t^d \label{eq:spectra_I(d)}\\
\Delta_{\iota}^d X(t) &= \int_{-\infty}^{\infty} e^{i\omega t} (1-e^{-i \iota\omega})^d dZ(\omega) + (d!) X_d \iota^d \label{eq:spectral_differenced} \\
D(h; \iota_1,\iota_2) &= \int_{-\infty}^{\infty} e^{i\omega h} (1-e^{-i \omega \iota_1})^d (1-e^{i \omega \iota_2})^d dF(\omega) + (d!)^2 A_d^2 \iota_1^d \iota_2^d \label{eq:spectral_structur_fn}
\end{align}
Where  $\int_{-\infty}^{\infty} = \lim_{T \rightarrow \infty, \epsilon \rightarrow 0} \{ \int_{-T}^{-\epsilon} + \int_{\epsilon}^{T}\}$, \quad $Z(\omega_2) - Z(\omega_1) = \int_{-\omega_1}^{\omega_2} \frac{dZ_y(\omega)}{(i \omega)^d}$, \quad and \quad $F(\omega_2) - F(\omega_1) = \int_{-\omega_1}^{\omega_2} \frac{dF_y(\omega)}{\omega^{2d}}$.  A random variable $X_0 = X(0)$ and $X_1, X_2, \cdots, X_{d-1}$ are some random variables. A random variable $X_d$ is  introduced in order to deal with the jump discontinuity at $\omega=0$. These random variables $X_0, X_1, \cdots, X_d$ can, in particular, be constants. In the structure function, $\langle|X_d| \rangle = A_d^2 \ge 0$.
\end{remark}

\bigskip

\begin{remark}\label{rmk:spectral_decomp_conti2} (Yaglom 1987) \cite{yaglom1987correlation}
If the correlation function of $C_y(h)$ in \eqref{eq:corr_y} decays rapidly with respect to $h$, that is,
\begin{align}
    &\int_{-\infty}^{\infty} |C_y(h)|^2 dh < \infty \label{eq:corr_y_decay}
\end{align}
then, the random variable $X_d$ and $A_d$ in \eqref{eq:spectra_I(d)}, \eqref{eq:spectral_differenced}, and \eqref{eq:spectral_structur_fn} are zero, and the spectral density function of the process $X(t)$ exists; thus, the spectral distribution function $dF(\omega)$ in \eqref{eq:spectral_structur_fn} can be expressed as $f(\omega)d\omega$ where $f(\omega) = F'(\omega)$. In the later part of this research, we assume the condition in \eqref{eq:corr_y_decay} is satisfied. We also assume that the random variables $X_0, X_1, \cdots, X_{d-1}$ are zero for simplicity.
\end{remark}

\bigskip

\begin{remark}\label{rmk:delta} (Chiles and Delfiner 1999) \cite{Chiles1999}
The differencing operator $\Delta_{\iota}$ in Definition \ref{def:random_process_stationary_increment} reduces the degree of the polynomial by one level. 
That is, if we suppose any polynomial function with any degree $n \in \mathbb{N}$ such as 
$$p_n(t) = \sum_{i=0}^{n} a_{i} t^i$$
Then,
$$\Delta_{\iota} p_n(t) = \sum_{i=1}^{n} a_i p^*_{i-1}(t) \quad \text{where} \quad p^*_{i-1}(t) = -\sum_{j=0}^{i-1} \binom{i}{j} t^{j} (-\iota)^{i-j}$$
\end{remark}

\bigskip

\subsection{Intrinsic Random Functions on the Real line}\label{subsec:irf}

In this section, the definition and properties of an IRF on the real line $\mathbb{R}$ will be examined. For this aim, it is required to identify its allowable measure.

\begin{definition} (Matheron \@1973 ; Chiles and Delfiner \@ 1999) \cite{Materon1973} \cite{Chiles1999} \label{def:allowable_measure_temporal}
Let $\Lambda_d$ be the space of measures with compact supports and $\lambda \in \Lambda_d$. $\lambda$ is an allowable measure of order $d$ on $\mathbb{R}$ if it annihilates polynomials of degree less than $d$. That is,

$$
f(\lambda) = \int f(x) \lambda(dx)
$$

and for a polynomial function $p(\cdot)$ of degree less than $d$,

$$
p(\lambda) = \int p(x) \lambda(dx) = 0 
$$

where $\ell=0,1,2,\cdots, d-1$. We denote that $\Lambda_d$ is a class of such allowable measures, and then it is clear that $\Lambda_{d+1} \subset \Lambda_{d}$.
\end{definition}

\bigskip

A common example is the discrete allowable measure formed by a finite set of weights $\lambda_i$ at points $x_i$ on $\mathbb{R}$:
$$
\lambda = \sum_{i=1}^{n} \lambda_i \delta_{x_i} \quad \text{where $\delta_{x_i}$ is the Dirac measure at $x_i$}
$$
In this case, the integral becomes a summation:
$$
f(\lambda) = \int f(x) \lambda(dx) = \sum_{i=1}^{n} \lambda_i f(x_i)
$$
The annihilation condition for a polynomial $p(x)$ of degree less than $d$ then requires:
\begin{align*}
p(\lambda) &= \int p(x) \lambda(dx) \\
&= \sum_{i=1}^{n} \lambda_i p(x_i) = 0
\end{align*}

\bigskip

\begin{remark} (Chiles and Delfiner 1999) \cite{Chiles1999} \label{rmk:allowable_measure_shift invariant}
An allowable measure $\lambda \in \Lambda_d$ is closed under translations of shift. That is,
$$p_{d-1}(\tau_h \lambda) = \int p_{d-1}(x+h) \lambda(dx) = 0$$
where $p_{d-1}(\cdot)$ is a polynomial function of degree $d-1$ and $\tau_h$ is a shift operator by $h \in \mathbb{R}$.
\end{remark}

\bigskip


\begin{definition} \label{def:irf_temporal}
A random process $X(t)$ on the real line is an intrinsic random function with its order $d$, denoted IRF(d), if the process $X(\lambda)$ is shift invariant for any $t \in \mathbb{R}$ and any $\lambda \in \Lambda_d$. That is, for any allowable measure $\lambda \in \Lambda_d$ and any $\tau_h \in \mathrm{G}$ where $\mathrm{G}$ is a group of shift operators and $h \in \mathbb{R}$, it satisfies that
\begin{align*}
&E\bigl(X(\lambda) \bigl) = E\bigl(X(\tau_h \lambda) \bigl)\\
&Cov\bigl(X(\lambda_1), X(\lambda_2) \bigl) = Cov\bigl(X(\tau_h \lambda_1), X(\tau_h \lambda_2) \bigl) \quad \text{where} \quad \text{any }\lambda_1, \lambda_2 \in \Lambda_d
\end{align*}
\end{definition}

\bigskip

\begin{remark}
    A stationary random process is denoted as an IRF(0). Note that this notation differs slightly from Matheron’s (1793) \cite{Materon1973}, where a stationary process is represented by IRF(-1). In this paper, $\kappa \ge 1$ is used to describe nonstationary random processes, instead of $\kappa \ge 0$.
\end{remark}

\bigskip

\begin{definition} \label{def:icf}
    Let a random process $X(t)$ be an IRF(d) and let $\Lambda_d$ be the space of allowable measures of order $d$. The intrinsic covariance function $K(\cdot)$ of $X(t)$ is the stationary function that defines the covariance between any two allowable linear combinations, $X(\lambda_1)$ and $X(\lambda_2)$, for all $\lambda_1, \lambda_2 \in \Lambda_d$, such that:
    $$
    Cov\bigl(X(\lambda_1), X(\lambda_2)\bigl) = \int_{\mathbb{R}} \int_{\mathbb{R}} K(x-y) \, \lambda_1(dx) \, \lambda_2(dy).
    $$
    This stationary function $K(\cdot)$ is called the Intrinsic Covariance Function ICF of order $d$, and is also widely referred to as a Generalized Covariance Function (Matheron 1793; Chiles and Delfiner 1999) \cite{Materon1973} \cite{Chiles1999}..
\end{definition}

\bigskip

\section{Connection between Random Processes with Stationary Increments and Intrinsic Random Functions on $\mathbb{R}$}

Finally, we explore the relationship between random processes with stationary increments and intrinsic random functions, and show they are equivalent on the real line. By using the definition \ref{def:allowable_measure_temporal} and \ref{def:irf_temporal}, we can easily show that the difference operator used for random processes with stationary increments is in fact an allowable measure for an IRF(d).

\bigskip

\begin{lemma}\label{lemm:allowable_measure_delta}
Define
\[
\lambda_{\Delta_{\iota,t}^d} := \sum_{k=0}^{d} (-1)^k \binom{d}{k} \delta_{t - k\iota}, \quad t,\iota \in \mathbb{R}.
\]
Suppose that \( X(t) \) is an IRF$(d)$. Then, \( \lambda_{\Delta_{\iota,t}^d} \in \Lambda_d \), and \( X(\lambda_{\Delta_{\iota,t}^d}) = \Delta_{\iota}^d X(t) \). That is, the finite difference operator \( \Delta_{\iota}^d \), defined in Definition~\ref{def:random_process_stationary_increment}, corresponds to an allowable measure for an IRF$(d)$ and annihilates all polynomials of degree up to \( d-1 \).
\end{lemma}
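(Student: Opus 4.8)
The plan is to verify the two bundled assertions separately: the algebraic correspondence \( X(\lambda_{\Delta_{\iota,t}^d}) = \Delta_{\iota}^d X(t) \), and the membership \( \lambda_{\Delta_{\iota,t}^d} \in \Lambda_d \). Each reduces to bookkeeping, but they draw on different inputs — the first on the definition of \( X(\lambda) \), the second on the degree-lowering property of \( \Delta_\iota \) from Remark~\ref{rmk:delta}.

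First I would dispose of the identity, which is essentially definitional. The measure \( \lambda_{\Delta_{\iota,t}^d} \) is the discrete measure carrying weights \( \lambda_k = (-1)^k \binom{d}{k} \) at the points \( x_k = t - k\iota \) for \( k = 0, 1, \dots, d \). Applying the rule \( X(\lambda) = \int X(x)\,\lambda(dx) = \sum_k \lambda_k X(x_k) \) for a discrete allowable measure gives directly
\[
X(\lambda_{\Delta_{\iota,t}^d}) = \sum_{k=0}^d (-1)^k \binom{d}{k} X(t - k\iota),
\]
which is exactly \( \Delta_{\iota}^d X(t) \) as written in Definition~\ref{def:random_process_stationary_increment}. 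No further work is needed there, and notably the IRF\((d)\) hypothesis is not even invoked for this step.

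Next I would establish that \( \lambda_{\Delta_{\iota,t}^d} \) annihilates every polynomial of degree at most \( d-1 \); by Definition~\ref{def:allowable_measure_temporal} this is membership in \( \Lambda_d \). Concretely, I must show that for each polynomial \( p \) with \( \deg p \le d-1 \),
\[
p(\lambda_{\Delta_{\iota,t}^d}) = \sum_{k=0}^d (-1)^k \binom{d}{k}\, p(t - k\iota) = 0.
\]
The cleanest route uses Remark~\ref{rmk:delta}: the first-order operator \( \Delta_\iota \) lowers the degree of any polynomial by exactly one, and it sends constants to zero. Since \( \Delta_\iota^d \) is the \( d \)-fold composition of \( \Delta_\iota \), an induction shows that applying it to a polynomial of degree \( \le d-1 \) reduces that polynomial to the zero polynomial after \( d \) steps; evaluating at the base point \( t \) then yields the displayed sum, so \( p(\lambda_{\Delta_{\iota,t}^d}) = 0 \). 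A purely combinatorial alternative is to expand \( (t - k\iota)^m \) by the binomial theorem and invoke \( \sum_{k=0}^d (-1)^k \binom{d}{k} k^m = 0 \) for \( 0 \le m \le d-1 \).

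The only point requiring genuine care — the main obstacle, such as it is — is confirming that the single-sum measure \( \lambda_{\Delta_{\iota,t}^d} \) really represents the \( d \)-fold iterate of \( \Delta_\iota \), so that Remark~\ref{rmk:delta} may be applied \( d \) times. This is handled most transparently by writing \( \Delta_\iota = I - S_\iota \), where \( S_\iota \) is the shift \( S_\iota X(t) = X(t - \iota) \); the binomial theorem then gives \( \Delta_\iota^d = (I - S_\iota)^d = \sum_{k=0}^d (-1)^k \binom{d}{k} S_\iota^k \), and \( S_\iota^k X(t) = X(t - k\iota) \) reproduces exactly the weights of \( \lambda_{\Delta_{\iota,t}^d} \). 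With the composition identity secured, the degree-reduction induction applies without friction, and the final clause of the statement — that \( \Delta_\iota^d \) corresponds to an allowable measure for an IRF\((d)\) and annihilates all polynomials of degree up to \( d-1 \) — follows as an immediate restatement of the two parts just proved.
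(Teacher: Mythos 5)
Your proposal is correct and follows essentially the same route as the paper: both arguments reduce membership in $\Lambda_d$ to iterating the degree-lowering property of $\Delta_\iota$ from Remark~\ref{rmk:delta} $d$ times until a polynomial of degree at most $d-1$ is annihilated. The only difference is one of rigor, not strategy: you explicitly verify, via $\Delta_\iota^d = (I - S_\iota)^d$ and the binomial theorem, that the single-sum measure coincides with the $d$-fold composition of $\Delta_\iota$ --- a step the paper's proof uses silently --- and you likewise spell out the definitional identity $X(\lambda_{\Delta_{\iota,t}^d}) = \Delta_\iota^d X(t)$, both of which are welcome but minor additions.
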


\begin{proof}
First, suppose $p_{d-1}(t)$ is a polynomial of degree $d-1$. Since $p_{d-1}(\lambda_{\Delta_{\iota,t}})=\Delta_{\iota} p_{d-1}(t)$, by Remark \ref{rmk:delta}, $\Delta_{\iota} p_{d-1}(t)$ becomes a polynomial of order $d-2$. Then, it is obvious that $\Delta_{\iota}^{d-1}p_{d-1}(t)$ is a polynomial with degree 0, which is a constant.\\
Since 
 $$ p_{d-1}(\lambda_{\Delta_{\iota,t}^d}) = \Delta_{\iota}^{d}p_{d-1}(t) = \Delta_{\iota} \biggl\{\Delta_{\iota}^{d-1} p_{d-1}(t) \biggl\},$$
we can conclude that
$$\Delta_{\iota}^{d} p_{d-1}(x) = 0$$

To sum up, $\Delta_{\iota}^{d}$ annihilates polynomials of degree $d-1$; thus, it is an allowable measure of order $d$.
\end{proof}

\bigskip

\begin{definition}\label{def:Stationarity}
Suppose that $X(\cdot)$ is a random process on the real line $\mathbb{R}$. Then, the process is called (weakly) stationary if 

$$E\biggl( X(t) \biggl) = E\biggl( X(\tau_h t) \biggl)$$
and
$$Cov\biggl( X(t), X(s) \biggl) = Cov\biggl( X(\tau_h t), X(\tau_h s) \biggl)$$
for any $t,s,h \in \mathbb{R}$ and any $\tau_h \in \mathrm{G}$ where $\mathrm{G}$ is a group of shift operators.  
\end{definition}

\bigskip

\begin{lemma}\label{lemm:stationary}
Suppose $Y(t)$ is stationary for $t \in \mathbb{R}$ and continuous in quadratic mean. Then, $Y(\lambda)$ is stationary for any allowable measure with any order. That is,
$$E\bigl(Y(\lambda) \bigl) = E\bigl(Y(\tau_h\lambda) \bigl)$$
$$ Cov\bigl(Y(\lambda_1), Y(\lambda_2) \bigl) = Cov\bigl(Y(\tau_h \lambda_1), Y(\tau_h \lambda_2) \bigl)$$
for any $\lambda, \lambda_1, \lambda_2 \in \Lambda_d$ where $d$ is any positive integer, and $\tau_h$, a shift operator with any $h \in \mathbb{R}$.
\end{lemma}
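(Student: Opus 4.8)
The plan is to reduce the claimed stationarity of the linear functional $Y(\lambda)$ to the stationarity of the underlying process $Y$ itself. The essential observation is that a shift $\tau_h$ applied simultaneously to both arguments of a covariance leaves the lag $x-y$ unchanged, so the shift disappears once the covariance of $Y$ is expressed through its covariance function. The hypothesis of quadratic-mean continuity enters only to make the object $Y(\lambda) = \int Y(x)\,\lambda(dx)$ well defined and to license moving the expectation and covariance inside the integral.

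First I would make precise the meaning of $Y(\lambda)$. Since $Y$ is continuous in quadratic mean, $x \mapsto Y(x)$ is a continuous $L^2$-valued map, and integrating it against a finite signed measure $\lambda$ of compact support produces a well-defined $L^2$ random variable $Y(\lambda) = \int Y(x)\,\lambda(dx)$, realized as a quadratic-mean limit of Riemann sums. In the transparent discrete case $\lambda = \sum_i \lambda_i \delta_{x_i}$ this is simply the finite linear combination $\sum_i \lambda_i Y(x_i)$. Using the convention of Remark \ref{rmk:allowable_measure_shift invariant}, the shifted functional is $Y(\tau_h\lambda) = \int Y(x+h)\,\lambda(dx)$.

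For the mean, interchanging $E$ with the integral gives $E(Y(\lambda)) = \int E(Y(x))\,\lambda(dx)$; by stationarity (Definition \ref{def:Stationarity}) the mean $E(Y(x)) \equiv \mu$ is constant, so $E(Y(\lambda)) = \mu\,\lambda(\mathbb{R})$, and the identical computation applied to $\tau_h\lambda$ with $E(Y(x+h)) \equiv \mu$ yields the same value. For the covariance I would similarly bring the operator inside a double integral and invoke stationarity to write the covariance of $Y$ as a function of the lag, $\Cov(Y(x),Y(y)) = C(x-y)$, obtaining
\[
\Cov\bigl(Y(\lambda_1),Y(\lambda_2)\bigr) = \int\!\!\int C(x-y)\,\lambda_1(dx)\,\lambda_2(dy).
\]
The same identity applied to $\tau_h\lambda_1,\tau_h\lambda_2$ replaces $C(x-y)$ by $C\bigl((x+h)-(y+h)\bigr) = C(x-y)$, so the two covariances are equal. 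This lag-invariance is the crux of the argument.

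The main obstacle is rigorously justifying the interchange of $E$ and $\Cov$ with the integral against $\lambda$. For discrete $\lambda$ this is merely linearity of expectation and bilinearity of covariance and needs no continuity hypothesis; in the general case it is a Fubini-type statement that rests on quadratic-mean continuity, which guarantees that the approximating Riemann sums converge in $L^2$ and hence that their second moments converge to the stated integrals. I would therefore either establish the two identities first for discrete allowable measures and then pass to the limit via quadratic-mean continuity, or appeal directly to the theory of $L^2$-valued (Bochner) integration, in either case being careful that the double integral defining the covariance is absolutely convergent on the compact supports of $\lambda_1$ and $\lambda_2$.
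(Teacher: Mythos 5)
Your proposal is correct and follows essentially the same route as the paper's proof: interchange the expectation and covariance with the integral against $\lambda$ (a Fubini-type step licensed by compact support and quadratic-mean continuity), then invoke stationarity of $Y$ pointwise so the shift $h$ cancels in the lag. Your additional care in defining $Y(\lambda)$ as a quadratic-mean limit of Riemann sums, and your observation that the discrete case needs only bilinearity, make the argument slightly more rigorous than the paper's but do not change its substance.
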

\begin{proof}
Suppose that the random process $Y(t)$ is stationary for $t \in \mathbb{R}$, as in Definition~\ref{def:Stationarity}, and let $\lambda \in \Lambda_d$ be any allowable measure with compact support, where $d \in \mathbb{Z}_+$.

\begin{align*}
    E\bigl( Y(\lambda) \bigl) &= E\bigl( \int Y(t) \lambda(dt) \bigl)\\
    &= \int E\bigl( Y(t) \bigl) \lambda(dt)
\end{align*}
since the order of integration and expectation can be interchanged by the Fubini's theorem in that $\lambda$ has compact support (finite measure) and $Y(t)$ is continuous in quadratic mean (implying integrability in the $L^2$ sense). Then, by stationarity of $Y(t)$ in terms of t, for any $h\in \mathbb{R}$, we can also state that
\begin{align*}
   \int E\bigl( Y(t) \bigl) \lambda(dt) &= \int E\bigl( Y(t+h) \bigl) \lambda(dt)\\
    &= E\bigl( \int Y(t+h) \lambda(dt) \bigl) = E\bigl( Y(\tau_h \lambda) \bigl)\\
\end{align*}
since $\tau_h \lambda$ is still a finite measure.

Similarly, for any $\lambda_1, \lambda_2 \in \Lambda_d$ with compact support, $d \in \mathbb{Z}_+$, and $t,s \in \mathbb{R}$,
\begin{align*}
    Cov\bigl( Y(\lambda_1), Y(\lambda_2) \bigl) &= Cov\bigl( \int Y(t) \lambda_1(dt), \int Y(s) \lambda_{2}(ds) \bigl)\\
    &= \int \int Cov\bigl( Y(t), Y(s) \bigl) \lambda_1(dt) \lambda_2(ds) \quad \text{(by Fubini's thoerem with the same reasons)}
\end{align*}
By stationarity of $Y(t)$ in terms of any $t \in \mathbb{R}$, for any $h\in \mathbb{R}$,
\begin{align*}
    Cov\bigl( Y(\lambda_1), Y(\lambda_2) \bigl) &= Cov\bigl( \int Y(t) \lambda_1(dt), \int Y(s) \lambda_{2}(ds) \bigl)\\
    &= \int \int Cov\bigl( Y(t+h), Y(s+h) \bigl) \lambda_1(dt) \lambda_2(ds)\\
    &= Cov\bigl(\int Y(t+h) \lambda_1(dt), \int Y(s+h) \lambda_2(ds) \bigl)\\
    &= Cov\bigl( Y(\tau_h \lambda_1), Y(\tau_h \lambda_2) \bigl)
\end{align*}
Therefore, if $Y(t)$ is stationary for any $t \in \mathbb{R}$, then $Y(\lambda)$ is stationary for $\lambda$, an allowable measure with any order.
\end{proof}

\bigskip

With the definitions, remarks and lemmas provided, we can now establish the connection between intrinsic random functions and random processes with stationary increments on the real line.

\bigskip

\begin{theorem}\label{thm:irf_rpsi}
For $d \ge 1$ and $d \in \mathbb{Z}$, a stochastic process $X(t)$ is an intrinsic random function of order $d$ (IRF(d)) on the real line if and only if $X(t)$ is a random process with stationary increments order $d$ provided that the condition in Remark \ref{rmk:spectral_decomp_conti2}.
\end{theorem}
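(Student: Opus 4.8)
The plan is to prove the two implications separately; they have very different flavors. The forward direction, IRF$(d)\Rightarrow$ I$(d)$, is purely algebraic and rests on Lemma~\ref{lemm:allowable_measure_delta}, while the converse is where the spectral hypothesis of Remark~\ref{rmk:spectral_decomp_conti2} becomes essential. For the forward direction I would start from an IRF$(d)$ process $X(t)$, recall from Lemma~\ref{lemm:allowable_measure_delta} that $\lambda_{\Delta_{\iota,t}^d}\in\Lambda_d$ with $X(\lambda_{\Delta_{\iota,t}^d})=\Delta_\iota^d X(t)$, and observe that shifting the base point commutes with differencing, i.e. $\tau_h\lambda_{\Delta_{\iota,t}^d}=\lambda_{\Delta_{\iota,t+h}^d}$ and hence $X(\tau_h\lambda_{\Delta_{\iota,t}^d})=\Delta_\iota^d X(t+h)$. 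Applying the two shift-invariance identities of Definition~\ref{def:irf_temporal} to the pair $\lambda_1=\lambda_{\Delta_{\iota,t}^d}$, $\lambda_2=\lambda_{\Delta_{\iota,s}^d}$ (with a common increment $\iota$) then gives $E(\Delta_\iota^d X(t))=E(\Delta_\iota^d X(t+h))$ and $Cov(\Delta_\iota^d X(t),\Delta_\iota^d X(s))=Cov(\Delta_\iota^d X(t+h),\Delta_\iota^d X(s+h))$, which are exactly the weak-stationarity conditions of Definition~\ref{def:Stationarity} for $\Delta_\iota^d X(\cdot)$. Since $\iota$ is arbitrary, $X(t)$ is I$(d)$.

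For the converse, suppose $X(t)$ is I$(d)$ and invoke the representation \eqref{eq:spectra_I(d)}, which under Remark~\ref{rmk:spectral_decomp_conti2} collapses to $X(t)=\int_{-\infty}^{\infty}\bigl(e^{it\omega}-\sum_{j=0}^{d-1}\frac{(it\omega)^j}{j!}\bigr)\,dZ(\omega)$. Fixing any compactly supported $\lambda\in\Lambda_d$, the central step is to integrate this against $\lambda$ and interchange $\int\cdot\,\lambda(dt)$ with $\int\cdot\,dZ(\omega)$. For each fixed $\omega$ the subtracted term $\sum_{j=0}^{d-1}\frac{(it\omega)^j}{j!}$ is a polynomial in $t$ of degree $d-1$, so the annihilation property of $\lambda$ removes it completely and leaves the clean form $X(\lambda)=\int_{-\infty}^{\infty}\widehat{\lambda}(\omega)\,dZ(\omega)$ with $\widehat{\lambda}(\omega)=\int e^{it\omega}\lambda(dt)$. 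Shift invariance of the covariance is then immediate: since $\widehat{\tau_h\lambda}(\omega)=e^{ih\omega}\widehat{\lambda}(\omega)$ and $|e^{ih\omega}|=1$, the phase cancels in $Cov(X(\tau_h\lambda_1),X(\tau_h\lambda_2))=\int e^{ih\omega}\widehat{\lambda_1}(\omega)\,\overline{e^{ih\omega}\widehat{\lambda_2}(\omega)}\,dF(\omega)$, reducing it to $\int\widehat{\lambda_1}(\omega)\overline{\widehat{\lambda_2}(\omega)}\,dF(\omega)=Cov(X(\lambda_1),X(\lambda_2))$.

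For the mean in the converse direction, I would note that stationarity of $\Delta_\iota^d X$ forces $\Delta_\iota^d m(t)$ to be constant in $t$, where $m(t)=E(X(t))$, which (with the regularity supplied by quadratic-mean continuity, as already used in Lemma~\ref{lemm:stationary}) makes $m$ a polynomial of degree at most $d$; expanding $m$ and using that $\lambda$ annihilates every monomial of degree $\le d-1$ then yields $E(X(\lambda))=E(X(\tau_h\lambda))$, so that both IRF$(d)$ conditions hold and $X$ is IRF$(d)$. The main obstacle I anticipate is making the converse rigorous, namely justifying the stochastic Fubini interchange of $\int\cdot\,\lambda(dt)$ and $\int\cdot\,dZ(\omega)$ and checking that $\widehat{\lambda}$ is square-integrable against $dF$ so that $X(\lambda)\in L^2$. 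The compact support of $\lambda$ makes $\widehat{\lambda}$ bounded on the real axis, and the annihilation of moments up to order $d-1$ forces $\widehat{\lambda}(\omega)=O(\omega^d)$ near the origin, which exactly offsets the $\omega^{-2d}$ growth of $dF$ implied by the relation $dF(\omega)=dF_y(\omega)/\omega^{2d}$ in Remark~\ref{rmk:spectral_decomp_conti}; verifying this near-origin cancellation is the quantitative heart of the argument and explains why order-$d$ annihilation is the correct pairing with an I$(d)$ process.
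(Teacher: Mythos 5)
Your proposal is correct, and while your forward direction coincides with the paper's (both rest on Lemma~\ref{lemm:allowable_measure_delta} together with Definition~\ref{def:irf_temporal} applied to the differencing measures $\lambda_{\Delta_{\iota,t}^d}$, your explicit remark that $\tau_h\lambda_{\Delta_{\iota,t}^d}=\lambda_{\Delta_{\iota,t+h}^d}$ is just a small clarification of a step the paper leaves implicit), your converse genuinely departs from the paper's after the shared opening moves. Both arguments start from the representation \eqref{eq:spectra_I(d)} reduced via Remark~\ref{rmk:spectral_decomp_conti2}, interchange integrals by Fubini, and use order-$d$ annihilation to remove the polynomial part of the kernel; but the paper then swaps the integrals back, defines the pointwise process $Y^*(t):=\int_{-\infty}^{\infty}e^{it\omega}\,dZ(\omega)$, asserts it is stationary, and invokes Lemma~\ref{lemm:stationary} to conclude $X(\lambda)=Y^*(\lambda)$ is stationary, whereas you stay entirely in the spectral domain, write $X(\lambda)=\int\widehat{\lambda}(\omega)\,dZ(\omega)$ with $\widehat{\lambda}(\omega)=\int e^{it\omega}\lambda(dt)$, and obtain shift invariance directly from $\widehat{\tau_h\lambda}(\omega)=e^{ih\omega}\widehat{\lambda}(\omega)$ and unimodularity of the phase. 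Your route buys rigor exactly where the paper is weakest: since $dF(\omega)=dF_y(\omega)/\omega^{2d}$ need not be finite near $\omega=0$, the pointwise object $Y^*(t)$ can fail to exist in $L^2$, so the paper's reduction to Lemma~\ref{lemm:stationary} is delicate, while your observation that moment annihilation forces $\widehat{\lambda}(\omega)=O(\omega^{d})$ at the origin, precisely offsetting the $\omega^{-2d}$ singularity so that $\int|\widehat{\lambda}(\omega)|^{2}\,dF(\omega)\le C\int dF_y(\omega)<\infty$, both avoids evaluating the stationary process at individual points and substantiates the claim $X(\lambda)\in L^{2}(\Omega)$ that the paper merely asserts when justifying Fubini. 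Your separate treatment of the mean (the $d$-th difference of $m(t)=E(X(t))$ is constant in $t$, hence, with quadratic-mean continuity, $m$ is a polynomial of degree at most $d$, whose shift defect $m(t+h)-m(t)$ has degree $d-1$ and is annihilated by $\lambda$) is also sound and slightly more general than the paper's setting, where Remark~\ref{rmk:spectral_decomp_conti2} already sets the drift terms to zero.
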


\begin{proof}

($\Rightarrow$) Let a random process $X(\cdot)$ be an intrinsic random function with order $d$ on the real line, and define\\ 
$$\lambda_{\Delta_{\iota,t}^{d}} := \sum_{k=0}^{d} (-1)^k \binom{d}{k} \delta_{t - k\iota}, \quad t,\iota \in \mathbb{R}.$$\\
Then, $X(\lambda_{\Delta_{\iota,t}^{d}}) = \Delta_{\iota}^{d} X(t)$ and by Lemma~\ref{lemm:allowable_measure_delta}, $\lambda_{\Delta_{\iota,t}^{d}}$ is allowable with its degree $d$ on $\mathbb{R}$.\\

By Definition~\ref{def:irf_temporal} of an intrinsic random function, for an allowable measure $\lambda_{\Delta_{\iota,t}^{d}}, \lambda_{\Delta_{\iota,s}^{d}} \in \Lambda_{d}$ and any $t,s,x \in \mathbb{R}$,
\begin{align}
&E\bigl(X(\lambda_{\Delta_{\iota,t}^{d}}) \bigl) = E\bigl(X(\tau_x\lambda_{\Delta_{\iota,t}^{d}}) \bigl) \label{eq:irf_id1}\\
&Cov\bigl(X(\lambda_{\Delta_{\iota,t}^{d}}), X(\lambda_{\Delta_{\iota,s}^{d}}) \bigl) = Cov\bigl(X(\tau_x \lambda_{\Delta_{\iota,t}^{d}}), X(\tau_x \lambda_{\Delta_{\iota,s}^{d}}) \bigl) \label{eq:irf_id2}
\end{align}

Since $X(\lambda_{\Delta_{\iota,t}^{d}}) = \Delta_{\iota}^{d} X(t)$ and $X(\lambda_{\Delta_{\iota,s}^{d}}) = \Delta_{\iota}^{d} X(s)$, Equation~\eqref{eq:irf_id1} and \eqref{eq:irf_id2} can also be expressed as\\
\begin{align*}
&E\bigl( \Delta_{\iota}^{d} X(t) \bigl) = E\bigl( \Delta_{\iota}^{d} X(\tau_x t) \bigl)\\
&Cov\bigl( \Delta_{\iota}^{d} X(t), \Delta_{\iota}^{d} X(s) \bigl) = Cov\bigl( \Delta_{\iota}^{d} X(\tau_x t), \Delta_{\iota}^{d} X(\tau_x s) \bigl)
\end{align*}

Hence, $\Delta_{\iota}^{d} X(t)$ is stationary, which implies that $X(t)$ is a random process with stationary increments of order $d$.

\bigskip

($\Leftarrow$) Suppose $X_d(t)$ is a random process with stationary increments of order $d$. Then, by its spectral representation given in Remark \ref{rmk:spectral_decomp_conti} and the condition in Remark \ref{rmk:spectral_decomp_conti2},

$$X(t) = \int_{-\infty}^{\infty} \bigl( e^{it\omega} -1 - it\omega - \cdots - \frac{(i t \omega)^{d-1}}{(d-1)!} \bigl) dZ(\omega)$$
Then, for any allowable measure $\lambda \in \Lambda_d$ with compact support,

\begin{align*}
X(\lambda) &= \int_\mathbb{R} \int_{-\infty}^{\infty} \bigl( e^{it\omega} -1 - it\omega - \cdots - \frac{(i t \omega)^{d-1}}{(d-1)!} \bigl) dZ(\omega) \lambda(dt)\\
&= \int_{-\infty}^{\infty} \int_\mathbb{R} \bigl( e^{i t \omega} -1 - it\omega - \cdots - \frac{(i t \omega)^{d-1}}{(d-1)!} \bigl) \lambda(dt) dZ(\omega) \quad \text{(by Fubini's theorem).}\\
\end{align*}
Since $\lambda$ has compact support, the inner integral is finite. Furthermore, because $X(t)$ is a random process (assumed $L^2$) and $\lambda \in \Lambda_d$, $X(\lambda)$ is guaranteed to be in $L^2(\Omega)$, where $\mathbf{\Omega}$ denotes the sample space. This justifies the use of the Fubini's theorem to interchange the order of stochastic and Lebesgue-Stieltjes integration.
By the definition of an allowable measure, any $\lambda \in \Lambda_{d}$ annihilates polynomials of its order $d-1$. Hence,
$$
X(\lambda)=\int_{-\infty}^{\infty} \int_\mathbb{R} e^{i t \omega} \lambda(dt) dZ(\omega) = \int_\mathbb{R} \int_{-\infty}^{\infty} e^{i t \omega} dZ(\omega) \lambda(dt) \quad \text{(by Fubini's theorem again).}
$$
Therefore, by defining $Y^*(t) := \int_{-\infty}^{\infty} e^{it\omega} dZ(\omega)$, which is a stationary process for $t \in \mathbb{R}$,
$$X(\lambda) = \int_\mathbb{R} Y^*(t) \lambda(t) = Y^*(\lambda)$$

Since $Y^*(t)$ is stationary with respect to $t$, $Y^*(\lambda)$ is stationary with respect to $\lambda$ by Lemma \ref{lemm:stationary}. Therefore, $X(\lambda)$ is stationary for $\lambda$ as well.
In conclusion, $X(t)$ is an intrinsic random function of order $d$ on the real line.
\end{proof}

\bigskip

We can also analyze the relationship between intrinsic structure function of IRF and structure function of I(d) process.

\bigskip

\begin{remark} \label{rmk:structure_fn_and_ICF}
    Let $X(\cdot)$ be an intrinsic random function of order $d$ on $\mathbb{R}$, denoted as IRF$(d)$. Let $K(\cdot)$ be its intrinsic covariance function of $X(\cdot)$ in Definition~\ref{def:icf}. Then, the structure function $E\biggl( \Delta_{\iota_1}^{d}X(t) \cdot \Delta_{\iota_2}^{d}X(s) \biggl)$ in Definition~\ref{def:random_process_stationary_increment} can be expressed in terms of the intrinsic covariance function as
    \begin{align*}
        E\bigl( \Delta_{\iota_1}^{d}X(t) \cdot \Delta_{\iota_2}^{d}X(s) \bigl) &= \sum_{k_1=0}^{d} (-1)^{k_1} \binom{d}{k_1} \sum_{k_2=0}^{d} (-1)^{k_2} \binom{d}{k_2} K\bigl( (t - k_1 \iota_1) - (s-k_2 \iota_2) \bigl)
    \end{align*}
\end{remark}
For example, for $d=\iota_1=\iota_2=1$, 
    \begin{align*}
        E\bigl( \Delta^{1}X(t) \cdot \Delta^{1}X(s) \bigl) &= 2K(t-s) - K(t-s+1) - K(t-s-1)
    \end{align*}

\bigskip

The equivalence established by Theorem \ref{thm:irf_rpsi} and explicitly linked by Remark \ref{rmk:structure_fn_and_ICF} confirms that the Intrinsic Covariance Function ($K(\cdot)$) and the Structure Function both represent the same, unique dependency information within the non-stationary process.

Since the overall process $X(t)$ has infinite variance, the stationary information is uniquely limited to the covariance structure of the allowable increments. Therefore, $K(\cdot)$ and the Structure Function share the unique most important stationary information—the precise second-order dependency that remains after the non-stationary polynomial drift has been mathematically annihilated. This is the only component of the process that is stable and estimable from a single realization of the data.

As a result of Theorem \ref{thm:irf_rpsi}, we can conclude that for an $\text{I(d)}$ process $X(t)$, the stationary covariance function of $\Delta_\iota^d X(t)$ is structurally determined by the intrinsic covariance function of order $d$ on the real line. In addition, it enables us to see that the differencing operator $\Delta_\iota^d$ is one of the allowable measures that annihilate the polynomial components of an intrinsic random function. Theorem \ref{thm:irf_rpsi} also allows us to use the spectral representation of $\text{I(d)}$ to explore the structure of an intrinsic random function on the real line.

\bigskip

\section{Applications and Discussions} \label{sec:application_discuss}

So far, we have demonstrated the equivalence between random processes with stationary increments and intrinsic random functions on the real line. This fact allows us to apply the theories of each concept to the other, enhancing our understanding of both.

The theoretical unification provides immediate practical benefits for modelers in both time series and geostatistics. Specifically, the congruence validates geostatistical models (as concepts like the intrinsic covariance function used in IRF models are directly supported by the well-established spectral theory of I(d) processes). It enables spectral estimation by allowing practitioners to apply advanced spectral analysis tools, traditionally limited to time series, to estimate intrinsic properties and fit covariance models in non-stationary settings with greater analytical justification. Finally, it facilitates methodological cross-pollination, resolving the need to characterize a non-stationary process as one or the other.

 In this section, we explore applications and examples that illustrate how the combined theories of random processes with stationary increments and an intrinsic random function can be used to deepen our understanding of non-stationary random processes on the real line.

\subsection{Brownian Motion}
The $d=1$ case, often termed Brownian Motion (or the Wiener process), serves as the canonical example demonstrating the equivalence established in Theorem \ref{thm:irf_rpsi}. As a well-known random process with stationary increments of order I(1), differencing the process induces stationarity. We illustrate how its structure aligns precisely with the properties of an IRF(1) process. For simplicity, we assume $X(0)=X_0=0$, allowing its spectral representation and covariance function $D(t,s) = \mathrm{Cov}(X(t), X(s))$ to be expressed as (Yaglom 1987) \cite{yaglom1987correlation}:
\begin{align*}
&X(t) = \frac{1}{\sqrt{2\pi}} \int_{-\infty}^{\infty} \frac{e^{it\omega} - 1}{i\omega} dZ^*(\omega)\\
&D(t,s) = Cov\bigl( X(t), X(s) \bigl) =
\begin{cases}
C \text{ min} \bigl\{ |t|, |s| \bigl\} \quad &\text{for} \quad ts \ge 0\\
0 \quad &\text{for} \quad ts < 0
\end{cases}
\end{align*}

where $dZ^*(\omega) = \sqrt{2\pi} i \omega dZ(\omega)$ and C is a positive constant.

The equivalence from Theorem~\ref{thm:irf_rpsi} implies that Brownian Motion must also satisfy the definition of an IRF(1). This means that applying any allowable measure $\lambda \in \Lambda_1$ (which annihilates constant terms) to $X(t)$ must result in a stationary process $X(\lambda)$. Applying an arbitrary $\lambda \in \Lambda_1$:
\begin{align*}
X(\lambda) &= \frac{1}{\sqrt{2\pi}} \int_{\mathbb{R}} \int_{-\infty}^{\infty} \frac{e^{i t\omega} - 1}{i\omega} dZ^*(\omega) \lambda(dt)\\
&\text{By Fubini's theorem,}\\
&= \frac{1}{\sqrt{2\pi}}\int_{-\infty}^{\infty} \int_{\mathbb{R}} \frac{e^{i t\omega} - 1}{i\omega} \lambda(dt) dZ^*(\omega)\\
&\text{Since $\lambda \in \Lambda_1$ annihilates the polynomial of degree $d-1=0$, $\int 1 \cdot \lambda(dt) = 0$.}\\
&\text{Thus,the term corresponding to the constant '1' vanishes, leaving:}\\
&= \frac{1}{\sqrt{2\pi}}\int_{-\infty}^{\infty} \int_{\mathbb{R}} \frac{e^{i t\omega}}{i\omega} \lambda(dt) dZ^*(\omega)\\
&\text{By defining } Y(t) := \frac{1}{\sqrt{2\pi}} \int_{-\infty}^{\infty} \frac{e^{i t\omega}}{i\omega} dZ^*(\omega), \text{ the process becomes:}\\
&= Y(\lambda)
\end{align*}
Since $Y(t)$ is clearly a stationary process, $Y(\lambda)$ is also stationary by Lemma \ref{lemm:stationary}.

Furthermore, if we use the specific allowable measure defined by the first-order differencing operator, $\lambda_{\Delta_{\iota,t}} \in \Lambda_1$:
\begin{align*}
X_{\Delta_{\iota,t}}(t) : &= X(\lambda_{\Delta_{\iota,t}}) = Y(\lambda_{\Delta_{\iota,t}})\\
&= \int_{-\infty}^{\infty} \frac{1}{\sqrt{2\pi}} \frac{e^{i t \omega}}{i\omega} dZ^*(\omega) - \int_{-\infty}^{\infty} \frac{1}{\sqrt{2\pi}} \frac{e^{i (t-\iota) \omega}}{i\omega} dZ^*(\omega)\\
&= \int_{-\infty}^{\infty} \frac{1}{\sqrt{2\pi}}\frac{e^{i t \omega} \biggl\{ 1 - e^{-i \iota \omega} \biggl\}}{i \omega} dZ^*(\omega)
\end{align*}
This is precisely the spectral representation of a stationary process, where $\Delta_\iota X(t)$ is stationary (the I(1) definition). The corresponding structure function is:
\begin{align*}
Cov \bigl( X_{\Delta_{\iota,t}}(t), X_{\Delta_{\iota,t}}(t+h) \bigl) &= D(h) = \frac{1}{2\pi} \int_{-\infty}^{\infty} e^{i h \omega} \frac{(1-e^{-i \iota \omega}) (1-e^{i \iota \omega})}{\omega^2} dF^*(\omega)
\end{align*}
where $dF^*(\omega)= \langle|dZ^*(\omega)|^2\rangle$. Hence, the structure function $D(h)$ is stationary with respect to $h$, demonstrating the direct link between $IRF(1)$ and $I(1)$ via the use of the allowable measure.

\bigskip

\subsection{Universal Kriging}

Universal kriging is a commonly used application of an IRF(d) in spatial statistics, where the goal is to find the best linear unbiased estimator (BLUE), which minimizes the mean-squared prediction error, to predict values in unsampled locations. Let $X(t)$ be an intrinsic random process with order $d$ and $t \in \mathbb{R}$. Then, given the allowable measure $\lambda_{t_0} \in \Lambda_d$ such that $\lambda_{t_0} := \sum_{i=1}^n \eta_i \delta_{t_i} - \delta_{t_0}$, we can express the mean-squared prediction error for any unsampled index $t_0 \in \mathbb{R}$ as:

\begin{align*}
    & E\bigl( X(\lambda_{t_0}) \bigl)^2 = E\bigl( \sum_{i=1}^n \eta_i X(t_i) - X(t_0) \bigl)^2 \quad \text{where $t_1, t_2, \cdots, t_n \in \mathbb{R}$ are the observed indices from the dataset.} \\
    &\text{Then, by Theorem \ref{thm:irf_rpsi} and Remark \ref{rmk:spectral_decomp_conti},}\\
    &= E\biggl[ \sum_{i=1}^n \eta_i \int_{-\infty}^{\infty} \bigl( e^{i t_i \omega} -1 - i t_i \omega - \cdots - \frac{(i t_i \omega)^{d-1}}{(d-1)!} \bigl) dZ(\omega) - \int_{-\infty}^{\infty} \bigl( e^{i t_0 \omega} -1 - i t_0 \omega - \cdots - \frac{(i t_0 \omega)^{d-1}}{(d-1)!} \bigl) dZ(\omega) \biggl]^2\\
    &= E\Biggl[ \int_{-\infty}^{\infty} \Biggl\{ \sum_{i=1}^n \eta_i \bigl( e^{i t_i \omega} -1 - i t_i \omega - \cdots - \frac{(i t_i \omega)^{d-1}}{(d-1)!} \bigl) - \bigl( e^{i t_0 \omega} -1 - i t_0 \omega - \cdots - \frac{(i t_0 \omega)^{d-1}}{(d-1)!} \bigl) \Biggl\} dZ(\omega) \Biggl]^2\\
    &\text{Since } \lambda_{t_0} = \sum_{i=1}^n \eta_i \delta_{t_i} - \delta_{t_0} \in \Lambda_d,\\
    &= E\bigl[ \int_{-\infty}^{\infty} \bigl( \sum_{i=1}^n \eta_i  e^{i t_i \omega} - e^{i t_0\omega} \bigl) dZ(\omega) \bigl]^2 = E\bigl[ \sum_{i=1}^n \eta_i \int_{-\infty}^{\infty} e^{i t_i \omega} dZ(\omega) - \int_{-\infty}^{\infty} e^{i t_0 \omega} dZ(\omega) \bigl]^2\\
    &\text{Defining } Y^{**}(t) := \int_{-\infty}^{\infty} e^{i t \omega} dZ(\omega).\\
    &= E\bigl[ \sum_{i=1}^n \eta_i Y^{**}(t_i) - Y^{**}(t_0) \bigl]^2 = E\bigl[ Y^{**}(\lambda_{t_0}) \bigl]^2\\
\end{align*}
Therefore, considering the measurement of error $\sigma^2$ and the constraints from the allowable measure $\lambda_{t_0}$, the object function to minimize is
$$M(\underset{\sim}{\eta}) = \sigma^2 \underset{\sim}{\eta}^T \underset{\sim}{\eta} + \underset{\sim}{\eta}^T \Psi \underset{\sim}{\eta} - 2 \underset{\sim}{\eta}^T \underset{\sim}{\phi} + \phi(0) + 2(\underset{\sim}{\eta}^TQ - \underset{\sim}{q}^T) \underset{\sim}{\rho}$$
where
$\underset{\sim}{\eta}= \{ \eta_1, \eta_2, \cdots, \eta_n \}^T$, \quad $\Psi = \{ \phi(t_i - t_j) \}_{n \times n}$, \quad $\underset{\sim}{\phi} = \{ \phi(t_1-t_0), \phi(t_2-t_0), \cdots, \phi(t_n - t_0) \}^T$, \quad $\underset{\sim}{q} = \{ 1, t, t^2, \cdots, t^{d-1} \}^T$, \quad and \quad $Q = \underset{\sim}{q} \underset{\sim}{q}^T$.\\
The function $\phi(\cdot)$ is an intrinsic covariance function of $X(\cdot)$; that is, it is the stationary covariance function of $Y^{**}(\cdot)$. The $d \times 1$ vector $\underset{\sim}{\rho}$ is the Lagrange multiplier associated with the constraints on the allowable measure $\lambda_{t_0}$. As a result, we can obtain the coefficient for the kriging predictor such that

$$ \underset{\sim}{\eta} = (\Psi + \sigma^2 I)^{-1} \Bigl[ \underset{\sim}{\phi} + Q\bigl\{ Q^T ( \Psi + \sigma^2 I)^{-1} Q \bigl\}^{-1} \bigl\{ \underset{\sim}{q} - Q^T(\Psi + \sigma^2 I)^{-1} \underset{\sim}{\phi} \bigl\} \Bigl].$$

This demonstrates that the kriging estimate for intrinsic random functions can also be derived using the spectral representations of random processes with stationary increments on the real line. In other words, the established equivalence between the two non-stationary classes, ensures that the stationary covariance structure of the differenced process (the I(d)'s structure function) is the mathematical analog of the intrinsic covariance function ($\text{IRF(d)}$ generalized covariance) when acting upon the same order of process increments. This means modelers can confidently use spectral analysis and estimation techniques derived from I(d) literature to build and justify the covariance models used for Universal Kriging, regardless of whether the original process was conceived as an I(d) or IRF(d). The end result is a clear, integrated mathematical foundation that deepens our understanding and enables more robust utilization of these processes in non-stationary applications.

\bigskip

\section{Conclusion}\label{sec:conclusion}

The central result of this paper is the rigorous demonstration of equivalence between the Intrinsic Random Function of order $d$ ($\text{IRF(d)}$) and the Random Process with Stationary Increments of order $d$ ($\text{I(d)}$) on the real line. This unification confirms that the fundamental mathematical structure underlying these two prominent non-stationary modeling frameworks is identical under conditions of finite spectral energy. Specifically, the equivalence establishes that the dependency information captured by the $\text{I(d)}$ structure function is the sole stationary component necessary to define the $\text{IRF(d)}$'s intrinsic covariance. This congruence resolves historical ambiguities and strengthens the practical confidence in utilizing either methodology.

The established equivalence provides a clear, integrated mathematical foundation, significantly deepening our understanding and enabling more robust utilization of these essential non-stationary processes. Key practical benefits include the validation of geostatistical models (since generalized covariance is supported by spectral theory), improved spectral estimation (allowing advanced time series tools to be used in spatial statistics), and enhanced methodological cross-pollination, ultimately streamlining the analysis and modeling of complex non-stationary phenomena in both time series and spatial statistics.

\bigskip

\bibliographystyle{unsrt}  
\bibliography{references}

\end{document}